\newtheorem{theorem}{Theorem}[section]
\newtheorem{lemma}[theorem]{Lemma}
\newtheorem{conjecture}[theorem]{Conjecture}
\theoremstyle{definition}
\newtheorem{definition}[theorem]{Definition}
\theoremstyle{remark}
\author{Wang Feng}
\begin{document}

\title{ A volume  stability theorem  on toric manifolds  with positive Ricci curvature }

\date{}

\maketitle
\begin{abstract}
In this short note, we will prove a volume stability theorem which says that if an n-dimensional toric manifold $M$ admits a $\mathbb{T}^n$ invariant K\"ahler metric $\omega$ with Ricci curvature no less than $1$ and its volume is close to the volume of $\mathbb{CP}^n$, $M$ is bi-holomorphic to $\mathbb{CP}^n$.
\end{abstract}
\section{\textbf{Introduction}}
To understand the geometry of manifolds under various curvature conditions is a fundamental question. In Riemannian geometry, we have Bishop-Gromov's volume comparison if the  Ricci curvature of the manifold is bounded from below. Using this theorem and some techniques in comparison geometry,  Colding proved the following result (\cite{5}):

 \begin{theorem}
 Given $\epsilon>0$, there exists $\delta = \delta(n,\epsilon) > 0$ such
that, if an n-dimensional manifold $M$ has $\text{Ric}_M\geq n-1$ and $\text{Vol}(M)>\text{Vol}(\mathbb{S}^n)-\delta,$ then $d_{GH}(M,\mathbb{S}^n)<\epsilon$.
 \end{theorem}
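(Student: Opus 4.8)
The plan is to prove the contrapositive by a compactness argument, reducing the theorem to two rigidity-type inputs about spaces with $\text{Ric}\geq n-1$: continuity of volume under noncollapsed Gromov--Hausdorff convergence, and the characterization of the round sphere as the only such limit space of maximal volume. Suppose the conclusion fails for some $\epsilon_0>0$, so that there exist $n$-manifolds $M_i$ with $\text{Ric}_{M_i}\geq n-1$ and $\text{Vol}(M_i)\to\text{Vol}(\mathbb{S}^n)$, yet $d_{GH}(M_i,\mathbb{S}^n)\geq\epsilon_0$ for all $i$.

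First I would collect the uniform bounds. Bonnet--Myers gives $\operatorname{diam}(M_i)\leq\pi$, and Bishop--Gromov volume comparison bounds the maximal number of disjoint $r$-balls in $M_i$ independently of $i$; hence Gromov's precompactness theorem provides a subsequence, not relabeled, converging in the Gromov--Hausdorff sense to a compact metric space $(X,d_X)$ with $\operatorname{diam}(X)\leq\pi$, and the lower volume bound $\text{Vol}(M_i)\to\text{Vol}(\mathbb{S}^n)>0$ rules out collapse.

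Next I would invoke the volume continuity theorem for noncollapsed limits under a lower Ricci bound to obtain $\mathcal{H}^n(X)=\lim_i\text{Vol}(M_i)=\text{Vol}(\mathbb{S}^n)$. Passing Bishop--Gromov monotonicity to the limit shows that for every $x\in X$ the ratio of $\mathcal{H}^n(B_r(x))$ to the volume of an $r$-ball in $\mathbb{S}^n$ is nonincreasing in $r$; the identity $\mathcal{H}^n(X)=\text{Vol}(\mathbb{S}^n)$ then forces equality in this monotonicity at $r=\pi$, hence at all radii and all basepoints. The rigidity case of the limiting Bishop--Gromov inequality --- equivalently, the fact that $X$ is a noncollapsed $\mathrm{RCD}(n-1,n)$ space achieving the maximal volume, so by maximal-volume rigidity it is isometric to the round $\mathbb{S}^n$ --- identifies $X\cong\mathbb{S}^n$. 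Since $d_{GH}(M_i,X)\to 0$, this contradicts $d_{GH}(M_i,\mathbb{S}^n)\geq\epsilon_0$ and proves the theorem.

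The crux is the combination of volume continuity and the limit rigidity: neither is formal, and both rest on the fine structure theory of Ricci limit spaces (the almost-splitting theorem, $\epsilon$-regularity, and the principle that a volume annulus/cone forces a metric cone). In Colding's original, pre-limit-space treatment one avoids $X$ altogether and argues quantitatively on each $M_i$: almost-maximal volume is shown, via the segment inequality, to force the functions $b_i=\cos d(p_i,\cdot)$ to almost solve the Obata system $\operatorname{Hess} b+bg=0$ in the $L^2$ sense, after which one builds from the $b_i$ an explicit Gromov--Hausdorff approximation $M_i\to\mathbb{S}^n$ whose error tends to $0$ together with $\text{Vol}(\mathbb{S}^n)-\text{Vol}(M_i)$. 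I would follow this more self-contained route, treating the integral Hessian estimate for $b_i$ as the main technical obstacle.
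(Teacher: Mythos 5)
This statement is not proved in the paper at all: it is Colding's shape theorem, quoted verbatim from reference [5] as background motivation, so there is no in-paper argument to compare against. Your compactness reduction is logically sound given the modern machinery: Myers plus Gromov precompactness yields a noncollapsed limit $X$; volume continuity gives $\mathcal{H}^n(X)=\mathrm{Vol}(\mathbb{S}^n)$; the limiting Bishop--Gromov ratio is nonincreasing, starts at the density $\theta(x)\le 1$ and equals $1$ at $r=\pi$, hence is identically $1$; and maximal-volume rigidity for such a limit (Cheeger--Colding ``volume cone implies metric cone'' plus the suspension structure, or Ketterer's maximal diameter theorem in the $\mathrm{RCD}$ setting) identifies $X$ with the round sphere, contradicting $d_{GH}(M_i,\mathbb{S}^n)\ge\epsilon_0$. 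Two caveats are worth making explicit. First, the reduction does not really localize the difficulty: Colding's volume convergence theorem and the limit-space rigidity are results of at least comparable depth to the statement itself, and historically were proved \emph{after} (and partly by means of) the almost-rigidity estimates underlying this theorem, so as a logical matter the argument is fine but as a proof strategy it is close to assuming a stronger package of results. Second, your sketch of Colding's original, quantitative route --- segment inequality, the $L^2$ almost-Obata estimate for $\cos d(p,\cdot)$, and the explicit Gromov--Hausdorff approximation built from $n+1$ almost-orthonormal such functions --- is accurate in outline, but you explicitly leave the integral Hessian estimate, which is the entire technical content, as an unproved ``main obstacle.'' So the proposal should be read as a correct reduction to known deep theorems rather than a self-contained proof; since the paper itself simply cites Colding, that is a reasonable level of detail for the role this theorem plays here.
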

 Here $d_{GH}$ denotes the  Gromov-Hausdorff distance between  Riemannian manifolds.  By another theorem of Colding (see the appendix in \cite{4}), we know that $M$ is in fact  diffeomorphic to $\mathbb{S}^n$.

 It is a natural question that how to get a more useful version of  Bishop-Gromov's volume comparison theorem in K\"ahler geometry and how to state a theorem  analogous to the above one. Because we have more structures on the manifold, the volume comparison with space form is not sharp: see \cite{8} for an improvement of local volume comparison for K\"ahler manifolds with Ricci curvature bounded from below. More recently,  Berman and Berndtsson considered toric manifolds  with positive Ricci curvature in  \cite{2} and \cite{3}, and they proved

 \begin{theorem}\label{Thm-2}
 Suppose that $(M,\omega)$ is smooth n-dimensional toric variety with $\mathbb{T}^n$ invariant K\"ahler form $\omega$ such that Ric $\omega\geq\omega$, then we have
 \begin{align}
 \text{Vol}(M)\leq\text{Vol}(\mathbb{CP}^n).
 \end{align}
 \end{theorem}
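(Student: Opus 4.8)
The plan is to pass to the open dense torus orbit $(\mathbb C^*)^n\subset M$ and reduce the whole statement to real convex analysis. Using the $\mathbb T^n$-invariance, on the orbit one may write $\omega=\tfrac{\sqrt{-1}}{2\pi}\partial\bar\partial\phi$ for a smooth strictly convex function $\phi=\phi(\xi)$ of the logarithmic coordinates $\xi\in\mathbb R^n$, whose gradient map $\nabla\phi\colon\mathbb R^n\to\operatorname{int}P$ is a diffeomorphism onto the interior of the Delzant moment polytope $P$ of $(M,[\omega])$; the change-of-variables formula then gives $\operatorname{Vol}(M,\omega)=c_n\int_{\mathbb R^n}\det(D^2\phi)\,d\xi=c_n\operatorname{Vol}(P)$ for a dimensional constant $c_n$, and the same relation holds for $\mathbb{CP}^n$, whose moment polytope is $(n+1)$ times the standard simplex, so that $\operatorname{Vol}(\mathbb{CP}^n)=c_n(n+1)^n/n!$. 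Since on the orbit $\operatorname{Ric}\omega=-\tfrac{\sqrt{-1}}{2\pi}\partial\bar\partial\log\det(D^2\phi)$, and a $\mathbb T^n$-invariant $(1,1)$-form $\tfrac{\sqrt{-1}}{2\pi}\partial\bar\partial u(\xi)$ is semi-positive precisely when $u$ is convex, the hypothesis $\operatorname{Ric}\omega\ge\omega$ translates into the single statement that
\begin{equation*}
v:=-\log\det(D^2\phi)-\phi\quad\text{is convex on }\mathbb R^n .
\end{equation*}
The first thing I would do is set up these dictionaries carefully, in particular the growth of $\phi$ near infinity that makes $\nabla\phi$ a diffeomorphism onto $\operatorname{int}P$ and that justifies the change of variables.

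Next, since $\operatorname{Ric}\omega\ge\omega>0$ the class $c_1(M)=[\operatorname{Ric}\omega]$ is Kähler, so $M$ is a smooth toric Fano; let $\Delta=\Delta_M$ be its anticanonical moment polytope, a reflexive polytope with $n!\operatorname{Vol}(\Delta)=(-K_M)^n$. On the orbit $\operatorname{Ric}\omega=\tfrac{\sqrt{-1}}{2\pi}\partial\bar\partial\psi$ with $\psi:=-\log\det(D^2\phi)=\phi+v$, and $\nabla\psi$ is a diffeomorphism onto $\operatorname{int}\Delta$. The key point here is the pointwise matrix inequality $D^2\psi=D^2\phi+D^2v\ge D^2\phi$, valid because $v$ is convex, which gives $\det(D^2\psi)\ge\det(D^2\phi)$ everywhere; integrating and using the change of variables once more,
\begin{equation*}
\operatorname{Vol}(P)=\int_{\mathbb R^n}\det(D^2\phi)\,d\xi\ \le\ \int_{\mathbb R^n}\det(D^2\psi)\,d\xi=\operatorname{Vol}(\Delta)=\tfrac1{n!}(-K_M)^n .
\end{equation*}
(Equivalently this is the cohomological inequality $[\omega]^n\le c_1(M)^n$, obtained by expanding $c_1(M)^n=\bigl([\omega]+(c_1(M)-[\omega])\bigr)^n$ and noting that $c_1(M)-[\omega]$ is nef.) Thus $\operatorname{Vol}(M,\omega)\le c_n(-K_M)^n/n!$, and the theorem is reduced to the assertion that $(-K_M)^n\le(n+1)^n$ for every $n$-dimensional smooth toric Fano $M$, with equality only if $M\cong\mathbb{CP}^n$.

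I expect this last assertion to be the main obstacle. In convex-geometric terms one must bound $\operatorname{Vol}(\Delta)$ for the reflexive polytope $\Delta=\{m\in M_{\mathbb R}:\langle m,v_\rho\rangle\ge-1\ \text{for all rays }\rho\}$, where the $v_\rho$ are the primitive ray generators, using completeness of the fan (the $v_\rho$ positively span $N_{\mathbb R}$, so $0\in\operatorname{int}\Delta$ and $\Delta$ is bounded), unimodularity of every maximal cone, and primitivity of the $v_\rho$. Acting by $GL_n(\mathbb Z)$ one may take $e_1,\dots,e_n$ among the $v_\rho$, so that $\Delta\subseteq\{m_i\ge-1\}$ and the remaining rays cut $\Delta$ down; the subtlety is that estimating $\Delta$ by a single reflexive simplex is not enough — the simplex with generators $e_1,\dots,e_n$ and $-(a_1e_1+\cdots+a_ne_n)$ has volume $(1+a_1+\cdots+a_n)^n/(n!\,a_1\cdots a_n)$, which already exceeds $(n+1)^n/n!$ once $n\ge4$ and some $a_i\ge2$, so one genuinely has to use several facets at once. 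I would try either an induction on $n$, peeling off projective-bundle (toric fibration) structure, or else follow Berman--Berndtsson, who prove precisely this volume bound for the moment polytope of a toric Fano and analyse the equality case, showing that equality forces $\Delta$ to be $GL_n(\mathbb Z)$-equivalent, up to translation, to $(n+1)$ times the standard simplex, i.e.\ $M\cong\mathbb{CP}^n$. Granting this, the chain $\operatorname{Vol}(M,\omega)=c_n\operatorname{Vol}(P)\le c_n\operatorname{Vol}(\Delta)\le c_n(n+1)^n/n!=\operatorname{Vol}(\mathbb{CP}^n)$ finishes the proof, and tracing back the equalities in the limiting case forces $M\cong\mathbb{CP}^n$.
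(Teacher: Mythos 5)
Your convex-analytic setup and the first inequality $\operatorname{Vol}(P)\le\operatorname{Vol}(\Delta)$ (equivalently $[\omega]^n\le c_1(M)^n$, since $c_1(M)-[\omega]$ is nef) are correct, but the statement you reduce everything to --- that $(-K_M)^n\le(n+1)^n$ for \emph{every} smooth toric Fano $n$-fold --- is false. Already in dimension $4$ the smooth toric Fano manifold $\mathbb{P}(\mathcal{O}_{\mathbb{P}^3}\oplus\mathcal{O}_{\mathbb{P}^3}(3))$ has anticanonical polytope of volume $100/3$, i.e.\ $(-K)^4=800>625=5^4$. This is precisely why Conjecture \ref{conj} is stated only for K\"ahler--Einstein toric Fanos, and why Berman--Berndtsson need the metric hypothesis $\mathrm{Ric}\,\omega\ge\omega$ and not merely Fano-ness: they do not prove (and could not prove) the combinatorial bound you hope to quote, their argument being an analytic Moser--Trudinger inequality for the class $[\omega]$ itself. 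Moreover the equality analysis you propose to ``follow'' from their paper is exactly what the present paper establishes for the first time. So the chain breaks at its second link: passing from $P$ to $\Delta$ via $\det D^2\psi\ge\det D^2\phi$ throws away the information that actually carries the theorem.

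The hypothesis must be used more finely than through the matrix inequality $D^2\psi\ge D^2\phi$: what it really controls is the \emph{barycenter} of $P$. In the paper's normalization, $(-1,\dots,-1)$ is a common vertex of $P$ and of the anticanonical polytope $L$, with the $n$ facets through it lying in the coordinate hyperplanes; then $\lim_{x_i\to-\infty}\partial v/\partial x_i=(-1)-(-1)=0$, and convexity of $v$ forces $\partial v/\partial x_i\ge0$ everywhere. Writing $\det(u_{ij})=e^{-u-v}$, the $i$-th barycenter coordinate of $P$ satisfies $\int u_{x_i}\det(u_{ij})\,dx\le\int(u+v)_{x_i}e^{-u-v}\,dx=0$. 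Translating $P$ into the first quadrant, its barycenter therefore lies in the simplex $F=\{x_i\ge0,\ \sum_i x_i\le n\}$, and Grunbaum's inequality applied to the hyperplane $\sum_i x_i=\mathrm{const}$ through the barycenter gives $\operatorname{Vol}(P)\le(\frac{n+1}{n})^n\operatorname{Vol}(F)=(n+1)^n/n!=\operatorname{Vol}(Q)$ directly, with no reference to $\operatorname{Vol}(\Delta)$ at all. Your dictionary and the convexity of $v$ are the right starting point; the missing idea is to convert that convexity into the sign of $\nabla v$, hence into the location of the barycenter of $P$, rather than into a comparison of Monge--Amp\`ere measures.
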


In fact, their theorem holds if the manifold admits a $\mathbb{C}^*$ action with finite fixed points and the metric is $\mathbb{S}^1$ invariant (see \cite{3}). The theorem  of  Berman and Berndtsson partially answered  a conjecture in \cite{10}:

\begin{conjecture}\label{conj} Any n-dimensional toric Fano manifold X that admits a K\"ahler-Einstein metric has anticanonical degree
$(-K_X)^n \leq(n + 1)^n$, with equality only for $\mathbb{CP}^n$.
\end{conjecture}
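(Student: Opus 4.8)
The conjecture has two parts for a toric Fano manifold $X$ carrying a K\"ahler-Einstein metric: the degree bound $(-K_X)^n\le(n+1)^n$ and its rigidity. The plan is to deduce the inequality directly from Theorem \ref{Thm-2} and the rigidity from the volume stability result of this note. The first step is to translate degree into volume. Since $X$ is Fano with a K\"ahler-Einstein metric, I normalize so that $\text{Ric}\,\omega=\omega$; then $[\omega]=2\pi c_1(X)$, because the Ricci form always represents $2\pi c_1(X)$. As $\mathbb{T}^n\subset\text{Aut}(X)$ is compact and the K\"ahler-Einstein metric is unique up to automorphism (Bando--Mabuchi), I may average over $\mathbb{T}^n$ to take $\omega$ to be $\mathbb{T}^n$-invariant without altering $[\omega]$. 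Integrating then gives
\begin{align}
\text{Vol}(X,\omega)=\frac{1}{n!}\int_X\omega^n=\frac{(2\pi)^n}{n!}\int_X c_1(X)^n=\frac{(2\pi)^n}{n!}(-K_X)^n,
\end{align}
and the identical computation on $\mathbb{CP}^n$, where $-K_{\mathbb{CP}^n}=(n+1)H$ with $\int_{\mathbb{CP}^n} H^n=1$, yields $\text{Vol}(\mathbb{CP}^n)=\frac{(2\pi)^n}{n!}(n+1)^n$.

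With this dictionary the inequality is essentially free: a K\"ahler-Einstein metric with $\text{Ric}\,\omega=\omega$ satisfies the hypothesis $\text{Ric}\,\omega\ge\omega$ of Theorem \ref{Thm-2}, so $\text{Vol}(X,\omega)\le\text{Vol}(\mathbb{CP}^n)$, and cancelling the common factor $(2\pi)^n/n!$ produces exactly $(-K_X)^n\le(n+1)^n$.

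The rigidity is the real content and the step I expect to be hardest. If $(-K_X)^n=(n+1)^n$, then the volume identities force $\text{Vol}(X,\omega)=\text{Vol}(\mathbb{CP}^n)$ exactly, so in particular $\text{Vol}(X,\omega)>\text{Vol}(\mathbb{CP}^n)-\delta$ for every $\delta>0$; feeding the $\mathbb{T}^n$-invariant metric $\omega$ into the volume stability theorem of this note then yields that $X$ is biholomorphic to $\mathbb{CP}^n$, as claimed. A self-contained alternative, avoiding the stability theorem, would be to extract the equality case from the Berman--Berndtsson proof of Theorem \ref{Thm-2} itself: their volume bound comes from a convexity/functional inequality over the moment polytope $P$ of $-K_X$, and one would need to show that equality there forces $P$ to be the standard reflexive simplex, whence $X\cong\mathbb{CP}^n$ by the fan--variety correspondence. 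The delicate point either way is that the bare inequality is blind to the combinatorial type of $P$, so characterizing the extremizers, rather than merely bounding the degree, is where the difficulty concentrates.
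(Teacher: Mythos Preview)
Your proposal is correct and follows exactly the route the paper takes: the inequality is Theorem~\ref{Thm-2} applied to the K\"ahler--Einstein metric, and the equality case is read off from the rigidity part of Theorem~\ref{Thm-4}. One small imprecision: averaging a K\"ahler--Einstein metric over $\mathbb{T}^n$ need not produce a K\"ahler--Einstein metric, so your averaging sentence should be replaced by the standard Matsushima argument (the isometry group of the KE metric is a maximal compact in $\mathrm{Aut}_0(X)$, hence contains a conjugate of $\mathbb{T}^n$, so a pullback of $\omega$ is $\mathbb{T}^n$-invariant and still KE); alternatively, since the volume only depends on $[\omega]=2\pi c_1(X)$, you merely need the existence of some $\mathbb{T}^n$-invariant KE metric, which this gives.
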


  In this short note,  we will determine when the equality holds  in  Theorem \ref{Thm-2} and in particular we give a complete answer to Conjecture \ref{conj}. More precisely, we  can prove a rigidity and stability theorem as follows:

 \begin{theorem}\label{Thm-4} The equality  in  Theorem \ref{Thm-2} holds if and only if $(M,\omega)$ is isometric to $(\mathbb{CP}^n,\omega_{FS})$. Moreover, there exists a positive number $\epsilon$ which depends only on $n$ such that if $M$ is a toric manifold with a $\mathbb{T}^n$ invariant metric $\omega$ satisfying $\text{Ric }\omega\geq\omega$ and
\begin{align}
\text{Vol}(M)\geq(1-\epsilon)\text{Vol}(\mathbb{CP}^n),
\end{align} $M$ is bi-holomorphic to $\mathbb{CP}^n$.
\end{theorem}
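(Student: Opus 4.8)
The plan is to first determine the equality case in Theorem~\ref{Thm-2}, and then bootstrap it to the stability assertion using the classical finiteness of smooth toric Fano $n$-folds. For the rigidity part, recall that on the open orbit $(\mathbb{C}^\ast)^n\subset M$ one writes $\omega=i\partial\bar\partial\phi$ in logarithmic coordinates $x\in\mathbb{R}^n$, with $\phi$ convex, $\nabla\phi(\mathbb{R}^n)=P^\circ$ the open moment polytope, and $\operatorname{Vol}(M)$ a universal constant times $\operatorname{vol}(P)=\int_{\mathbb{R}^n}\det(\nabla^2\phi)\,dx$; the hypothesis $\operatorname{Ric}\omega\ge\omega$ becomes the convexity of $-\log\det(\nabla^2\phi)-\phi$, together with Guillemin's boundary conditions (so that $\phi$ comes from a smooth metric on $M$). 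Berman--Berndtsson's estimate is an inequality between such integrals whose proof passes through a convexity argument in one real variable (a Pr\'ekopa--Leindler--type reduction), and the first task is to extract its equality case: equality forces the relevant convex functions to be affine images of the extremal configuration, which in turn forces the fan of $M$ to coincide with that of $\mathbb{CP}^n$, i.e.\ $M\cong\mathbb{CP}^n$. On $\mathbb{CP}^n$, however, $\operatorname{Vol}(\mathbb{CP}^n,\omega)=\frac{1}{n!}[\omega]^n$; since $\operatorname{Ric}\omega\ge\omega$ makes $c_1-[\omega]$ nef and $\mathbb{CP}^n$ has Picard number one, the value $\operatorname{Vol}(\mathbb{CP}^n)$ is attained only when $[\omega]=c_1$, and then $\operatorname{Ric}\omega-\omega$ is a semipositive form in the zero class, so $\operatorname{Ric}\omega=\omega$; by Bando--Mabuchi uniqueness of the K\"ahler--Einstein metric on $\mathbb{CP}^n$, $(M,\omega)$ is isometric to $(\mathbb{CP}^n,\omega_{FS})$. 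The converse implication is immediate.

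For the stability assertion, note first that $\operatorname{Ric}\omega\ge\omega>0$ makes $\operatorname{Ric}\omega$ a K\"ahler form representing $c_1(M)$, so $M$ is a toric Fano manifold, and by the classical finiteness of such manifolds in dimension $n$, $M$ lies in a fixed finite list $M^{(1)},\dots,M^{(N)}$. It therefore suffices to prove that for each $M^{(i)}\not\cong\mathbb{CP}^n$ the quantity
\[
V_i:=\sup\bigl\{\operatorname{Vol}(M^{(i)},\omega):\ \omega\ \mathbb{T}^n\text{-invariant K\"ahler},\ \operatorname{Ric}\omega\ge\omega\bigr\}
\]
is \emph{strictly} smaller than $\operatorname{Vol}(\mathbb{CP}^n)$: one then takes $\epsilon:=\min_i\bigl(\operatorname{Vol}(\mathbb{CP}^n)-V_i\bigr)/\operatorname{Vol}(\mathbb{CP}^n)>0$, and any $M$ as in the statement with $\operatorname{Vol}(M)\ge(1-\epsilon)\operatorname{Vol}(\mathbb{CP}^n)$ can equal no $M^{(i)}\not\cong\mathbb{CP}^n$, hence is biholomorphic to $\mathbb{CP}^n$.

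To get the strict gap, observe that $\operatorname{Vol}(M^{(i)},\omega)=\frac{1}{n!}[\omega]^n$ depends only on the cohomology class, and that $\operatorname{Ric}\omega\ge\omega$ forces $c_1(M^{(i)})-[\omega]$ to be represented by a semipositive form, hence nef; thus the admissible classes $[\omega]$ lie in the bounded set $\overline{K(M^{(i)})}\cap\bigl(c_1-\overline{K(M^{(i)})}\bigr)$, on which $\alpha\mapsto\frac{1}{n!}\alpha^n$ is continuous, so $V_i$ is realised along admissible forms $\omega_k$ with $[\omega_k]\to\alpha_\ast$. Suppose for contradiction $V_i=\operatorname{Vol}(\mathbb{CP}^n)$. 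Each $\omega_k$ has real Ricci curvature $\ge1$, hence uniformly bounded diameter (Bonnet--Myers), and (as $V_i=\operatorname{Vol}(\mathbb{CP}^n)>0$) is non-collapsed, so after normalising the convex potentials $\phi_k$ one extracts, along a subsequence, a convex limit $\phi_\infty$ whose Monge--Amp\`ere measure has total mass $\lim_k\operatorname{vol}(P_k)>0$ and which still satisfies the weak form of ``$-\log\det(\nabla^2\phi_\infty)-\phi_\infty$ is convex''. Applying the equality analysis of the rigidity step to this limiting configuration forces the fan of $M^{(i)}$ to be that of $\mathbb{CP}^n$, i.e.\ $M^{(i)}\cong\mathbb{CP}^n$, a contradiction; hence $V_i<\operatorname{Vol}(\mathbb{CP}^n)$, and the choice of $\epsilon$ above completes the proof.

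The main obstacle is twofold. First, one must extract the equality case of Berman--Berndtsson's one-variable convexity reduction cleanly enough to recognise $\mathbb{CP}^n$ from it: this underlies the rigidity statement and everything else rests on it. Second, the compactness used to show that the supremum $V_i$ is attained --- uniform non-collapsing and Hessian bounds for the toric potentials $\phi_k$, weak continuity of their Monge--Amp\`ere measures, and stability of the convexity of $-\log\det(\nabla^2\phi_k)-\phi_k$ under passing to the limit --- is precisely the step that upgrades rigidity to stability (given finiteness) rather than merely the non-strict inequality of Theorem~\ref{Thm-2}, and it must be carried out with care. An alternative that sidesteps this compactness would be a quantitative (stable) version of the Berman--Berndtsson inequality, proved directly from the convexity argument.
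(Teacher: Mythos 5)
Your high-level architecture (equality case of the volume inequality, then finiteness of toric Fano $n$-folds plus a compactness argument to produce a uniform gap) matches the paper's, and your cohomological endgame for the equality case ($c_1-[\omega]$ nef, Picard number one forcing $[\omega]=c_1$, then $\operatorname{Ric}\omega=\omega$ and Bando--Mabuchi) is correct and in fact fills in a point the paper leaves terse. But the proposal has a genuine gap exactly where the theorem's content lies: the identification of the equality case. You propose to ``extract the equality case of Berman--Berndtsson's one-variable convexity reduction'' and simply assert that equality forces the extremal configuration and hence the fan of $\mathbb{CP}^n$; no argument is given, and the paper explicitly states that the rigidity is \emph{not} currently accessible from that analytic proof. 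The paper's actual mechanism is combinatorial and different: setting $v=-\log\det u_{ij}-u$, the hypothesis $\operatorname{Ric}\omega\ge\omega$ makes $v$ convex; after normalizing the moment polytopes of $\omega$ and of $\operatorname{Ric}\omega$ into an ``adapted pair'' sharing the vertex $(-1,\dots,-1)$ with coordinate facets there, one gets $\partial v/\partial x_i\ge 0$ (the partials tend to $0$ at $-\infty$), whence each barycenter coordinate of $P$ satisfies $\int_P y_i\,dV=\int\tfrac{\partial u}{\partial x_i}e^{-u-v}\,dx\le\int\tfrac{\partial(u+v)}{\partial x_i}e^{-u-v}\,dx=0$. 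Gr\"unbaum's halfspace--barycenter inequality then gives $\operatorname{Vol}(P)\le\operatorname{Vol}(Q)$ for the standard simplex $Q$, and a first-moment comparison along the direction $(1,\dots,1)$ pins down the equality case as $P=Q$. Without a substitute for this step your rigidity claim, and everything downstream of it, is unproved.

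For the stability, your compactness is run at the level of the metrics: non-collapsing, Hessian bounds on $\phi_k$, weak convergence of Monge--Amp\`ere measures, persistence of the convexity of $-\log\det(\nabla^2\phi_k)-\phi_k$, and an application of the (unestablished) equality analysis to a possibly singular limit. As written this is both unjustified --- in particular the supremum $V_i$ need not be attained by a smooth admissible form, and nothing guarantees the limiting configuration is one to which an equality analysis applies --- and unnecessary. The paper's point is that both the volume and the barycenter-nonpositivity condition are data of the polytope $P$ alone; since each $P_i$ is obtained from one of the finitely many normalized Fano polytopes $B$ by moving facets inward, the relevant compactness is that of a bounded subset of $\mathbb{R}^{N-n}$, the limit $P_\infty$ inherits the barycenter condition trivially, and the rigidity lemma applies verbatim to $P_\infty$ (a short lattice-point argument then forces $B=P_\infty=Q$ up to translation). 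You should replace the metric-level compactness by this finite-dimensional polytope compactness; otherwise you owe the reader the analytic estimates you yourself flag as the obstacle.
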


In \cite{3},  Berman and Berndtsson  applied  a Moser-Trudinger typed  inequality established  in  \cite{1} to prove  Theorem \ref{Thm-2} . But so far we can't prove the rigidity using this analytic method.
 Inspired by the combinatoric proof by Bo'az Klartag for the K\"ahler-Einstein case in \cite{3}, we will apply the Grunbaum's inequality (\cite{7}) to prove our theorem. In order to use this inequality we should know the position of the barycenter of the moment polytope of $(M,\omega)$. We will use the Ricci curvature condition to achieve this. More detailed analysis gives us the rigidity and stability.

\textbf{Acknowledgement}: The author is grateful to his advisor Professor Zhu Xiaohua for many helpful conversations.
\section{\textbf{Preliminaries}}
At first, we give some basic materials of toric manifolds which are used in our proof. Here a toric manifold means a K\"ahler manifold $(M,\omega)$ containing $(\mathbb{C}^*)^n$ as a dense subset such that the standard action of $(\mathbb{C}^*)^n$ on itself extends to a holomorphic action on $M$. In general we suppose that the metric is $\mathbb{T}^n$ invariant and we can consider the moment map of $(M,\omega,\mathbb{T}^n)$.
\begin{definition}
 A polytope $P\subseteq\mathbb{R}^n$ is called a Delzant polytope if  each vertex is
contained in exactly n facets, and the normals of the n facets containing a given vertex form an integral basis of $\mathbb{Z}^n$.
\end{definition}
The image of the moment map above should be a Delzant polytope according to a theorem of Delzant (\cite{6}):
\begin{theorem}
Each Delzant polytope gives rise to a symplectic manifold $(M,\omega)$
with an action of $\mathbb{T}^n$ that preserves $\omega$, and all such symplectic manifolds arise this
way.
\end{theorem}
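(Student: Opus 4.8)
The plan is to prove the two halves of the statement by different techniques: the existence of $(M,\omega)$ from a Delzant polytope $P$ I would obtain by \emph{symplectic (Marsden--Weinstein--Meyer) reduction} of $\mathbb{C}^d$, while the assertion that every (compact, effective) symplectic toric manifold arises this way I would derive from the Atiyah--Guillemin--Sternberg convexity theorem together with a uniqueness argument of Moser type.

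For the existence direction, write $P=\{x\in\mathbb{R}^n:\langle x,u_i\rangle\geq\lambda_i,\ i=1,\dots,d\}$, where $u_1,\dots,u_d\in\mathbb{Z}^n$ are the primitive inward normals to the $d$ facets of $P$. Define the linear surjection $\beta:\mathbb{R}^d\to\mathbb{R}^n$ by $\beta(e_i)=u_i$; because $P$ is Delzant, some $n$ of the $u_i$ form a $\mathbb{Z}$-basis, so $\beta$ maps $\mathbb{Z}^d$ onto $\mathbb{Z}^n$ and descends to a surjective homomorphism $\mathbb{T}^d\to\mathbb{T}^n$ whose kernel $K$ is a closed subtorus of dimension $d-n$. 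First I would let $\mathbb{T}^d$ act on $\mathbb{C}^d$ in the standard Hamiltonian way, with moment map $\phi(z)=\tfrac12(|z_1|^2,\dots,|z_d|^2)$, restrict to the induced moment map $\phi_K=\iota^{*}\circ(\phi-\lambda)$ of the $K$-action (here $\iota:\mathfrak{k}\hookrightarrow\mathbb{R}^d$ is the inclusion and $\lambda=(\lambda_1,\dots,\lambda_d)$), and form the reduced space $M=\phi_K^{-1}(0)/K$. The Delzant hypothesis enters precisely here: the condition that the $n$ normals meeting at each vertex form a $\mathbb{Z}$-basis forces $0$ to be a regular value of $\phi_K$ and $K$ to act freely on the level set, so $M$ is a \emph{smooth} symplectic manifold of real dimension $2d-2(d-n)=2n$. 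It carries a residual Hamiltonian action of $\mathbb{T}^n=\mathbb{T}^d/K$, and one checks by unwinding the definitions that the image of its moment map is exactly $P$.

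For the converse I would take any compact connected symplectic manifold $(M,\omega)$ with an effective Hamiltonian $\mathbb{T}^n$-action and moment map $\mu$. By the convexity theorem the image $\Delta=\mu(M)$ is the convex hull of the images of the fixed points, hence a convex polytope. To see that $\Delta$ is Delzant I would invoke the equivariant Darboux / local normal form theorem at a fixed point $p$: near $p$ the action is linearizable with isotropy weights $w_1,\dots,w_n\in\mathbb{Z}^n$ and $\mu(z)=\mu(p)+\tfrac12\sum_i w_i|z_i|^2$. Effectiveness of a $\mathbb{T}^n$-action on a $2n$-manifold forces these weights to form a $\mathbb{Z}$-basis, which translates directly into the Delzant condition at the vertex $\mu(p)$.

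The uniqueness step is where I expect the real difficulty, and it is what makes the correspondence bijective (``all such manifolds arise this way''): I must show that two symplectic toric manifolds with the same moment polytope $P$ are $\mathbb{T}^n$-equivariantly symplectomorphic. The plan is to compare an arbitrary $(M,\omega,\mu)$ with the reduction model $(M_P,\omega_P,\mu_P)$ built above, constructing over the open dense orbit an equivariant diffeomorphism that intertwines the moment maps, and then running a Moser homotopy on the family $\omega_t=(1-t)\,\omega+t\,\varphi^{*}\omega_P$ to upgrade it to a symplectomorphism. The main obstacle will be extending and controlling this construction uniformly across the boundary strata of $P$, where the isotropy jumps and orbits degenerate: I must ensure the interpolation stays nondegenerate and the Moser vector field integrates along the faces, since that is exactly where the combinatorics of $P$ and the symplectic geometry of $M$ have to be matched.
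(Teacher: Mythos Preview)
The paper does not prove this theorem at all: it is quoted as background from Delzant's original article (reference \cite{6} in the paper), and no argument is supplied. So there is no ``paper's own proof'' to compare against.

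Your proposal is the standard outline of Delzant's classification and is essentially correct as a plan. The existence direction via symplectic reduction of $\mathbb{C}^d$ by the kernel torus $K$ is exactly Delzant's construction, and your identification of where the Delzant hypothesis enters (freeness of the $K$-action on the zero level, hence smoothness of the quotient) is accurate. Likewise, deducing the Delzant property of the moment image from the equivariant Darboux normal form at fixed points, together with effectiveness, is the right mechanism. For the uniqueness step your Moser-type strategy is reasonable but, as you correctly flag, the delicate point is controlling the isotopy across the boundary strata; Delzant's original argument handles this by building the equivariant symplectomorphism inductively over the face lattice (using local models over each face and patching), rather than by a single global Moser deformation. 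Either route works, but the inductive/sheaf-theoretic version is cleaner precisely because it sidesteps the issue of verifying nondegeneracy of $(1-t)\omega+t\varphi^{*}\omega_P$ near degenerate orbits.
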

Using the embedding of $(\mathbb{C}^*)^n$ in $M$, we set:
\begin{align}
\iota:(\mathbb{C}^*)^n \rightarrow M,\\
\iota^*\omega=\sqrt{-1}\partial\bar{\partial}u.
\end{align}. In toric coordinates: $\exp(x_i)=|z_i|^2 (z_i \text{ are holomorphic coordinates in }(\mathbb{C}^*)^n) $, the invariance of $\omega$ means $u$ is function of $x_i$. Then the image of $\nabla u=(\frac{\partial u}{\partial x_i})_{1\leq i\leq n}$ will be a moment map of $(M,\omega)$.

Given a toric manifold $(M,\omega)$, two moment maps may differ by a constant vector. When we choose a basis of group $(\mathbb{C}^*)^n $, these two moment polytopes differ by a translation. A change of basis of group $(\mathbb{C}^*)^n$ corresponds to a change of the integral basis of $\mathbb{Z}^n$, so it transforms Delzant polytopes to Delzant polytopes. The polytope also changes if we choose another $\mathbb{T}^n$ invariant K\"ahler metric on $M$ with the same complex structure, i.e. we choose another symplectic form compatible with the fixed complex structure. This can be described in the following way:
we denote the moment polytope by
\begin{align}
 P=\{x| \langle l_i,x\rangle\geq \lambda_i, 1\leq i\leq N, x\in\mathbb{R}^n, l_i\in\mathbb{Z}^n, \lambda_i \in \mathbb{R}\}.
 \end{align}
Then only $\lambda_i$ $(1\leq i\leq n)$ change while $l_i$ $(1\leq i\leq n)$  remain the same since they are just related to the complex structure (see \cite{9}\cite{11}). Using the description above, changing the symplectic form corresponds to changing the potential function $u$ on $\mathbb{R}^n$.

 When the manifold is a Fano variety with $\omega\in 2\pi c_1(M)$, we can get a moment polytope $P$ such that $\lambda_i$ are all equal to $-1$. It's can be realized in the following way (see \cite{9}): choose a potential $u$ of $\omega$ such that
 \begin{align}
 |\ln\text{det}u_{ij}+u| \text{ is bounded in }\mathbb{R}^n,
 \end{align}
 then the image of $\nabla u$ will be such a polytope $P$ with $\lambda_i=-1 (1\leq i\leq n)$.

 Because the normal vectors of the facets passing any point form an integral basis, we can do a coordinate transformation to change these vectors to the standard basis $e_k=(0,0,...,1,0,...0)$ with $1$ placed at position $k$. We can write this transformation as follows: choosing a vertex $p\in P$ with $l_i (1\leq i\leq n)$ as normal vectors of the facets passing $p$, we can form an affine map:
 \begin{align}
 x\mapsto (\langle l_i,x\rangle)_{1\leq i\leq n},
 \end{align}
which transforms $p$ to $(-1,-1,...,-1)$ and the polytope to
\begin{align}
\tilde{P}=\{x| \langle \tilde{l}_i,x\rangle\geq -1, 1\leq i\leq N, x\in\mathbb{R}^n, \tilde{l}_i\in\mathbb{Z}^n, \tilde{l}_k=e_k, 1\leq k\leq n \}.
\end{align}There are only finite many such polytopes in a given dimension.

According to Mabuchi's theorem (\cite{9}), we know that for a K\"ahler-Einstein manifold, the origin is the barycenter of $P$. We will prove a similar property of the barycenter of the moment map of a toric manifold admitting $\omega$ with $\text{Ric }\omega\geq\omega$.

\section{\textbf{Proof of the theorem}}
At first we give a lemma which deals with the volume of some specific kind of polytopes. Let $Q$ be the simplex spanned by
\begin{align}
(n+1,0,0,...,0),(0,n+1,0,0,...,0),...,(0,0,...,0,n+1).
 \end{align}
Recall that the Grunbaum's inequality (\cite{7}) says that if $P$ is a convex body, and $K$ denotes the intersection of $P$ with an affine half-space defined by one side of a hyperplane $H$ passing through the barycenter of $P$, then
\begin{align}
\text{Vol}(P)\leq(\frac{n+1}{n})^n\text{Vol}(K).
\end{align}
Let $F$ denote the simplex spanned by $(n,0,0,...,0),(0,n,0,0,...,0),...,(0,0,...,0,n)$. We have the following lemma.
\begin{lemma}If the barycenter of a polytope $P$ in the first quadrant lies inside $F$,
\begin{align}
\text{Vol}(P)\leq\text{Vol}(Q).
\end{align}
Moreover if the equality holds, $P$ is coincident with $Q.$
\end{lemma}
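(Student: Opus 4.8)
The plan is to apply Grunbaum's inequality along the direction $(1,\dots,1)$. Here I read $F$ as the solid $n$-simplex $\{x\in\mathbb{R}^n:x_i\ge 0,\ \sum_i x_i\le n\}$ and $Q$ as the solid $n$-simplex $\{x:x_i\ge 0,\ \sum_i x_i\le n+1\}$ (each the convex hull of the origin together with the listed vertices), so that $\text{Vol}(Q)=(n+1)^n/n!$; more generally the simplex $\Delta_t:=\{x:x_i\ge 0,\ \sum_i x_i\le t\}$ has volume $t^n/n!$. Let $b$ be the barycenter of $P$ and put $\sigma=\sum_i b_i$. Since $P$ lies in the first quadrant, $b_i\ge 0$, so the hypothesis $b\in F$ is exactly the inequality $\sigma\le n$. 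Take the hyperplane $H=\{x:\sum_i x_i=\sigma\}$, which passes through $b$, and let $K=P\cap\{x:\sum_i x_i\le\sigma\}$. Then $K\subseteq\Delta_\sigma$, so $\text{Vol}(K)\le\sigma^n/n!\le n^n/n!$, and Grunbaum's inequality gives
\begin{align}
\text{Vol}(P)\le\left(\frac{n+1}{n}\right)^n\text{Vol}(K)\le\left(\frac{n+1}{n}\right)^n\frac{n^n}{n!}=\frac{(n+1)^n}{n!}=\text{Vol}(Q),
\end{align}
which is the first assertion.

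For the rigidity, I would assume $\text{Vol}(P)=\text{Vol}(Q)$, so that equality holds throughout the chain above. This forces $\sigma=n$ (hence $H=\{\sum_i x_i=n\}$ passes through the barycenter), $K=\Delta_\sigma=\Delta_n=F$ (hence $F\subseteq P$ and $P\cap\{\sum_i x_i\le n\}=F$), and equality in Grunbaum's inequality, with $K$ the minority part of the partition. At this point I would invoke the equality case of Grunbaum's inequality: it holds only when $P$ is a cone $\text{conv}(\{v\}\cup B)$ whose apex $v$ lies in the halfspace $\{\sum_i x_i\le n\}$ defining $K$ and whose base $B$ is contained in a hyperplane parallel to $H$, say $B\subseteq\{\sum_i x_i=c\}$.

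It then remains to identify this cone with $Q$. If $c\le n$ then all of $P$ lies in $\{\sum_i x_i\le n\}$, forcing $P=F$ and $\text{Vol}(P)=n^n/n!$, a contradiction; hence $c>n$, and in particular $v\notin\{\sum_i x_i=c\}$. Since $v\in K=F$ is a vertex of $P$, it is a vertex of the simplex $F=\text{conv}\{0,ne_1,\dots,ne_n\}$ not lying on the facet $\{\sum_i x_i=n\}$, so $v=0$. For a cone with apex $0$ and base in $\{\sum_i x_i=c\}$, the slice $P\cap\{\sum_i x_i=t\}$ equals $\tfrac{t}{c}B$ for $0\le t\le c$; comparing this with the slices $P\cap\{\sum_i x_i=t\}=\{x:x_i\ge 0,\ \sum_i x_i=t\}$ that we already know for $t\le n$ forces $B=\{x:x_i\ge 0,\ \sum_i x_i=c\}$, so $P=\Delta_c$, and then $\text{Vol}(P)=(n+1)^n/n!$ gives $c=n+1$, i.e. $P=Q$. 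The main obstacle is precisely this equality step: one needs the sharp form of Grunbaum's inequality together with an exact description of its extremal bodies, and one must verify that this description, combined with the facts $\sigma=n$ and $K=F$ already extracted, genuinely pins down $P$ rather than merely constraining it; everything after that is elementary convex geometry.
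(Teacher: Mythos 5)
Your proof of the volume bound is exactly the paper's argument (cut $P$ by the hyperplane $\{\sum_i x_i=\sigma\}$ through the barycenter, note $K\subseteq F$, apply Grunbaum), so that part needs no comment. Your proof of the rigidity, however, takes a genuinely different route from the paper's. You invoke the \emph{equality characterization} of Grunbaum's inequality --- that equality forces $P$ to be a cone $\mathrm{conv}(\{v\}\cup B)$ with apex on the $K$-side and base parallel to the cutting hyperplane --- and then identify the apex with the origin and the cone with $Q$. That chain of deductions is sound (the extraction of $\sigma=n$ and $K=F$, the localization of $v$ at the unique vertex of $F$ off the facet $\{\sum_i x_i=n\}$, and the slice comparison are all correct), but as you yourself flag, it rests on a sharp extremal description that is strictly stronger than the inequality itself; Grunbaum's original paper exhibits cones as extremizers but the full ``only if'' statement is an additional input you would need to cite or prove. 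The paper sidesteps this entirely with a more elementary first-moment argument: setting $X=P\setminus Q$ and $Y=Q\setminus P$, one observes that since $P$ lies in the first quadrant every point of $X$ satisfies $\sum_i x_i\geq n+1$ while every point of $Y\subseteq Q$ satisfies $\sum_i x_i\leq n+1$; the barycenter hypothesis gives $\int_P\sum_i x_i\,dV\leq n\,\mathrm{Vol}(P)=n\,\mathrm{Vol}(Q)=\int_Q\sum_i x_i\,dV$ (the barycenter of $Q$ being $(1,\dots,1)$), hence $\int_X\sum_i x_i\,dV\leq\int_Y\sum_i x_i\,dV$, which together with $\mathrm{Vol}(X)=\mathrm{Vol}(Y)$ and the separation of $\sum_i x_i$ on $X$ versus $Y$ forces both sets to be null, i.e.\ $P=Q$. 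So the trade-off is: your route gives a structural picture of the extremal body but imports a nontrivial equality theorem; the paper's route uses only the inequality plus the specific position of $Q$, and is self-contained. If you keep your version, make the citation for the equality case of Grunbaum's inequality explicit, since that is the one step not covered by the reference as used in the rest of the argument.
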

\begin{proof}
 The first statement can be seen from Grunbaum's theorem above: the corresponding $K\subseteq F$. For the second statement, let $X=P\diagdown Q, Y=Q\diagdown P$ and choose a coordinate system $s_i$ with the barycenter as the origin and $(1,1,...,1)$ as the first axis. Then we have
 \begin{align}
  \int_Ps_1dV\leq\int_Qs_1 dV=0, \int_Xs_1dV\leq\int_Ys_1 dv.
  \end{align}
 But since
 \begin{align}
 s_1(x)\geq s_1(y) \text{ for } x\in X \text{ and }y\in Y,
 \end{align}
  both $X$ and $Y$ should be empty.
 \end{proof}

In order to apply this lemma to the moment polytope $P$ of $(M,\omega)$, we should know how to place $P$ and where the barycenter is. We are going to use the toric structure on $M$ and explore the Ricci curvature condition.

 Under the condition of theorem\ref {Thm-4}, we can write $\text{Ric }\omega=\omega+\beta$ where $\beta$ is a semi-positive 1-1 form. In $(\mathbb{C}^*)^n$, we can choose $u$ such that $\omega=\sqrt{-1}\partial\bar{\partial} u.$ In toric coordinates: $|z_i|^2=\exp(x_i)$, we set:
\begin{align}
v=-\ln\text{det} u_{ij}-u.
\end{align}
Using the formula of Ricci curvature and $\frac{\partial^2u}{\partial z_i\partial\bar{ z_j}}=\frac{\partial^2 u}{\partial x_i\partial x_j}\frac{1}{z_i}\frac{1}{\bar{z_j}}$, we see that
\begin{align}
\sqrt{-1}\partial\bar{\partial}v=\text{Ric }\omega-\omega=\beta.
\end{align} As $\beta$ is semi-positive, $v$ is a convex function.

From the following equalities:
\begin{align}
 \ln\text{det}(u+v)_{ij}+u+v=\ln\text{det}(u+v)_{ij}-\ln\text{det} u_{ij}=\ln\frac{(\text{Ric }\omega)^n}{\omega^n},
 \end{align}
 we know that $\ln\text{det}(u+v)_{ij}+u+v$ is bounded, so $\nabla (u+v)$ will be a moment map of $(M,\text{Ric }\omega)$. Denote the image of $\nabla (u+v)$ by $L$. As illustrated in section 2, we can suppose that $(-1,-1,...,-1)$ is a vertex of $L$ and the facets passing it are parallel to coordinate hyperplanes repectively:
 \begin{align}
 L=\{y| \langle l_i,y\rangle\geq -1, 1\leq i\leq N, y\in\mathbb{R}^n, l_i\in\mathbb{Z}^n, l_k=e_k, 1\leq k\leq n \}.
 \end{align}

 The gradient of $u$ will a moment of $(M, \omega)$. We denote the image of $\nabla u$ by $P$. Without changing $u+v$, we can add a linear function to $u$ and subtract the same one from $v$. This corresponds to a translation of $P$. As we have said above, $P$ can be obtained from $L$ by parallel movement of the facets. So we can translate $P$ so that $(-1,-1,...,-1)$ is a vertex of $P$ and the facets passing this vertex are parallel to coordinate hyperplanes like $L$:
 \begin{align}
 P=\{y| \langle l_i,y\rangle\geq \lambda_i, 1\leq i\leq N, y\in\mathbb{R}^n, l_i\in\mathbb{Z}^n, l_k=e_k, \lambda_k=-1, 1\leq k\leq n \}.
 \end{align}
Such a pair of polytopes $(P,L)$ is called an adapted pair of $(M,\omega)$.

  \begin{lemma}
 For an adapted pair $(P,L)$, the coordinates of the barycenter of $P$ are all nonpositive.
  \end{lemma}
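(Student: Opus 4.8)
The plan is to exploit the convexity of $v$ together with the specific normalization that the adapted pair enjoys: both $P$ and $L$ have $(-1,\dots,-1)$ as a vertex whose facets are the coordinate hyperplanes $y_k = -1$, and $L = P + (\text{outward movements of facets})$ in the sense that $L$ is obtained from $P$ by pushing facets outward (since $v$ is convex, $\nabla(u+v)$ covers a larger set than $\nabla u$, so $P \subseteq L$). I would first record this containment $P\subseteq L$ carefully, and note that for the $n$ coordinate facets the defining constants agree ($\lambda_k = -1$ for both), so $P$ and $L$ share the ``corner'' at $(-1,\dots,-1)$ and along each coordinate axis direction $P$ sticks out no further than $L$ near that vertex.

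\smallskip

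Next, the key analytic input: by Mabuchi-type reasoning for $L = \nabla(u+v)(\mathbb{R}^n)$, since $\ln\det(u+v)_{ij} + (u+v)$ is bounded, the function $u+v$ is (up to bounded error) the Legendre-type potential whose image polytope $L$ has its barycenter at the origin — this is exactly the statement quoted from Mabuchi's theorem for the Kähler--Einstein normalization, applied to the metric $\mathrm{Ric}\,\omega$ which satisfies $\mathrm{Ric}(\mathrm{Ric}\,\omega) $ need not hold, so more honestly: one should instead use that $u+v$ solves $\ln\det(u+v)_{ij} + (u+v) = f$ with $f$ bounded, which by the standard toric argument forces the barycenter of $L$ to be the origin (the boundedness of $f$ is what matters, not $f\equiv 0$). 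Granting this, the barycenter of $L$ is at $0$. I would then compare the barycenter of $P$ with that of $L$: writing $\mathrm{bar}(P) = \mathrm{bar}(L) + (\mathrm{bar}(P) - \mathrm{bar}(L))$ and using $P\subseteq L$ with the two polytopes agreeing on the coordinate-facet side, the region $L\setminus P$ lies entirely on the side of each coordinate hyperplane $\{y_k = -1\}$ \emph{away} from small $y_k$ — i.e. $L\setminus P$ has larger $k$-th coordinate on average than $P$ — so removing it to pass from $L$ to $P$ can only decrease each coordinate of the barycenter. Hence each coordinate of $\mathrm{bar}(P)$ is $\le$ the corresponding coordinate of $\mathrm{bar}(L) = 0$.

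\smallskip

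To make the last comparison rigorous coordinate by coordinate, fix $k$ and consider the $k$-th coordinate functional $y \mapsto y_k$. One has
\begin{align}
\mathrm{Vol}(L)\,\mathrm{bar}(L)_k = \int_L y_k\,dV = \int_P y_k\,dV + \int_{L\setminus P} y_k\,dV,
\end{align}
and the claim reduces to showing $\int_{L\setminus P} y_k\,dV \ge \mathrm{bar}(P)_k\,\mathrm{Vol}(L\setminus P)$, which follows if every point of $L\setminus P$ has $y_k \ge \mathrm{bar}(P)_k$. This in turn I would get from the structure of how $L$ dominates $P$: moving a facet of $P$ with inward normal $l_i$ outward (decreasing $\lambda_i$ to $-1$) adds a slab on which the linear function $\langle l_i, y\rangle$ is small; since the $l_i$ have nonnegative... here one must be careful, as the $l_i$ need not have nonnegative entries. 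So the genuinely delicate point — and the step I expect to be the main obstacle — is controlling the geometry of $L\setminus P$ well enough to conclude each coordinate separately; the clean way is probably to foliate the passage from $P$ to $L$ by moving one facet at a time and checking that moving facet $i$ outward shifts $\mathrm{bar}_k$ monotonically, using that the newly added region lies in $\{\langle l_i,y\rangle \le \lambda_i\} \subseteq \{y_k = -1\text{-side is already accounted for}\}$; alternatively one argues directly that $P$, being contained in the ``lower'' part of $L$ cut off near the shared vertex, has barycenter componentwise below that of $L$. I would present the one-facet-at-a-time reduction, as it isolates exactly where the vertex normalization $l_k = e_k$, $\lambda_k = -1$ is used.
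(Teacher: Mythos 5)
There is a genuine gap, and it sits exactly where you placed your main bet. Your argument needs the barycenter of $L$ to be at the origin, and you claim this follows from the boundedness of $f=\ln\det(u+v)_{ij}+(u+v)$ ("the boundedness of $f$ is what matters, not $f\equiv 0$"). That is false. Boundedness of $f$ only guarantees that the image of $\nabla(u+v)$ is the anticanonical polytope $L$ (i.e.\ the normalization $\lambda_i=-1$); Mabuchi's theorem locates the barycenter at the origin only when the K\"ahler--Einstein equation holds exactly, and indeed the vanishing of the barycenter of the anticanonical polytope is (essentially) equivalent to the existence of a KE metric. A concrete counterexample: for $\mathrm{Bl}_1\mathbb{CP}^2$ with fan rays $(1,0),(0,1),(-1,-1),(-1,0)$, the anticanonical polytope is the quadrilateral with vertices $(-1,-1),(1,-1),(1,0),(-1,2)$, whose barycenter is $(-1/6,\,1/12)$ --- not the origin, and with a strictly positive coordinate. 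So even if your Step 3 comparison of $\mathrm{bar}(P)$ with $\mathrm{bar}(L)$ went through, it would only give $\mathrm{bar}(P)_k\le \mathrm{bar}(L)_k$, which does not yield the lemma. Step 3 itself is also incomplete as you acknowledge: the non-coordinate normals $l_i$ can have negative entries, so the slabs $L\setminus P$ added by moving a facet outward need not lie in $\{y_k\ge \mathrm{bar}(P)_k\}$ coordinate by coordinate, and I do not see how to repair that route.

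The fix is to stop comparing Lebesgue barycenters of $P$ and $L$ and instead compare the two gradient maps against the \emph{same} measure upstairs on $\mathbb{R}^n$. Since $v=-\ln\det u_{ij}-u$, one has $\det u_{ij}=e^{-(u+v)}$, so
\begin{align}
a_i=\int_P y_i\,dV=\int_{\mathbb{R}^n}\frac{\partial u}{\partial x_i}\,e^{-(u+v)}\,dx .
\end{align}
The normalization of the adapted pair (both $P$ and $L$ having the facets $y_k=-1$ at the shared vertex) says precisely that $\partial u/\partial x_i\to-1$ and $\partial(u+v)/\partial x_i\to-1$ as $x_i\to-\infty$, hence $\partial v/\partial x_i\to 0$; convexity of $v$ then makes $\partial v/\partial x_i$ nondecreasing in $x_i$, so $\partial v/\partial x_i\ge 0$ everywhere and $\partial u/\partial x_i\le \partial(u+v)/\partial x_i$. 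Finally
\begin{align}
\int_{\mathbb{R}^n}\frac{\partial (u+v)}{\partial x_i}\,e^{-(u+v)}\,dx=-\int_{\mathbb{R}^n}\frac{\partial}{\partial x_i}\Bigl(e^{-(u+v)}\Bigr)dx=0
\end{align}
by the fundamental theorem of calculus (no barycenter-of-$L$ statement is needed; this weighted integral vanishes identically, and it differs from $\int_L y_i\,dV$ by the weight $e^{-f}$). Combining gives $a_i\le 0$. Note that this argument uses your correct observations (the shared coordinate facets, the convexity of $v$) but routes them through the potentials rather than through the polytopes, which is what sidesteps both of your problematic steps.
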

  \begin{proof}
  \begin{align}
  \lim_{x_i\rightarrow-\infty}\frac{\partial v}{\partial x_i}=\lim_{x_i\rightarrow-\infty}\frac{\partial (u+v)}{\partial x_i}-\lim_{x_i\rightarrow-\infty}\frac{\partial u}{\partial x_i}=(-1)-(-1)=0
  \end{align}
  $\text{ for any }i \text{ and fixed } x_j ( 1\leq j\leq n, j\neq i)$.
  Because $v$ is convex function we know that all the partial derivatives of $v$ are nonnegative. Denote the coordinates of the barycenter by $a_i$, we have
 \begin{flalign}
 \text{det}u_{ij}=exp(-u-v), \frac{\partial v}{\partial x_i}\geq0,
\end{flalign}
\begin{flalign}
a_i=\int_Py_idV=\int_{R^n}\frac{\partial u}{\partial x_i}\text{det}u_{ij}dx
 \leq\int_{R^n}\frac{\partial (u+v)}{\partial x_i}exp(-u-v)dx=0.&
 \end{flalign}
 The last inequality is the statement of the lemma.
\end{proof}

 \begin{proof}[Proof of Theorem \ref{Thm-4}]
 Using the notations above, we do a translation which moves $(-1,-1,...,-1)$ to the origin. Then $P$ will be a polytope inside the first quadrant with barycenter inside $F$ by the second lemma. The rigidity follows from this together with the assumption that $\text{Vol}(P)=\text{Vol}(Q)$ by the first lemma.

 Now we consider the stability. Suppose the statement doesn't hold, then there is a sequence of manifolds $(M_i,\omega_i)(i=1,2,3...)$ with volume converging to $\text{Vol}(\mathbb{CP}^n)$ and none of them is holomorphic to $\mathbb{CP}^n$.

   Construct adapted pairs $(P_i,L_i)$ of $(M_i,\omega_i)(i=1,2,3...)$. Because there are only finitely many such $L$, one of them appears infinitely times. We denote it by $B$ and select these $P_i$ corresponding to $B$.
These $P_i$ as moment polytopes of different symplectic classes can be obtained from $B$ by parallel movement of $B$'s facets of towards the interior. So $P_i$ can be determined by $N$ real numbers $\lambda_i$ such that $n$ of them are always $-1$.  This gives us a correspondence:
 \begin{align}
 P_i \leftrightarrow \lambda^{(i)} \in\mathbb{R}^{N-n}.
 \end{align}
 Because $P_i$ are inside $B$, these vectors in $\mathbb{R}^{N-n}$ are bounded. We can choose a convergent subsequence, and the limit corresponds to a polytope $P_\infty$. $\text{Vol}(P_\infty)=\text{Vol}(Q)$ and the coordinates of the barycenter of $P_\infty$ are all nonpositive. According to the first lemma, $P_\infty$ should be isomorphic to $Q$ by a translation. We are going to show that $B=P_\infty$: Since $P_i\subseteq B$, we have $P_\infty\subseteq B$. If $P_\infty\varsubsetneqq B$, the integral points in the interior of the facet of $P_\infty$ opposite $(-1,-1,...,-1)$ will be contained in the interior of $B$. But there is only one integral point in the interior of $B$, so we must have  $B=P_\infty$.

 We assumed that $M_i$ are not holomorphic to $\mathbb{CP}^n$, but now $B$ just differs from $Q$ by a translation. It follows that these $M_i$ are all holomorphic to $\mathbb{CP}^n$. It's a contradiction, so our theorem is proved.
 \end{proof}

Department of Mathematical Science, Peking University, 

Beijing 100871, China.

Email address: fengwang232@gmail.com
\end{document}